\def\cal#1{\mathcal{#1}}
\def\cal{\mathcal}
\newcommand{\comment}[1]{}
\newcommand{\ind}{{\bf 1}}
\newcommand{\proba}{\mathbb P}
\newcommand{\esp}{{\mathbb E}}
\newcommand{\cov}{{\rm{Cov}}}
\newcommand{\var}{{\rm{Var}}}
\newcommand{\calJ}{{\cal J}}
\newcommand{\calX}{{\cal X}}
\newcommand{\eqnh}{\begin{eqnarray*}}
\newcommand{\eqne}{\end{eqnarray*}}
\newcommand{\eqnhn}{\begin{eqnarray}}
\newcommand{\eqnen}{\end{eqnarray}}
\newcommand{\equh}{\begin{equation}}
\newcommand{\eque}{\end{equation}}
\def\summ#1#2#3{\sum_{#1 = #2}^{#3}}
\def\prodd#1#2#3{\prod_{#1 = #2}^{#3}}
\def\topp#1{^{(#1)}}
\def\abs#1{\left|#1\right|}
\def\ccbb#1{\left\{#1\right\}}
\def\pp#1{\left(#1\right)} %(\pp was defined differently...)
\def\bb#1{\left[#1\right]}
\def\mmid{\;\middle\vert\;}
\def\ceil#1{\left\lceil #1 \right\rceil}
\def\qmand{\quad\mbox{ and }\quad}
\def\qmwith{\quad\mbox{ with }\quad}
\def\mfa{\mbox{ for all }}
\def\wt#1{\widetilde{#1}}
\def\limn{\lim_{n\to\infty}}
\def\limm{\lim_{m\to\infty}}
\def\limsupn{\limsup_{n\to\infty}}
\def\R{{\mathbb R}}
\def\N{{\mathbb N}}
\newtheorem{Thm}{Theorem}[section]
\newtheorem{Lem}[Thm]{Lemma}
\newtheorem{Prop}[Thm]{Proposition}
\theoremstyle{definition}
\numberwithin{equation}{section}
\renewcommand{\comment}[1]{{\color{blue}\fbox{#1}}}
\def\commentYZ#1{}
\newcommand{\longcommenthide}[1]{}
\title{Large jumps of $q$-Ornstein--Uhlenbeck processes}
\address
{
W\l odzimierz Bryc\\
Department of Mathematical Sciences\\
University of Cincinnati\\
2815 Commons Way\\
Cincinnati, OH, 45221-0025, USA.
}
\email{wlodzimierz.bryc@uc.edu}
\author{Yizao Wang}
\address
{
Yizao Wang\\
Department of Mathematical Sciences\\
University of Cincinnati\\
2815 Commons Way\\
Cincinnati, OH, 45221-0025, USA.
}
\email{yizao.wang@uc.edu}
\begin{document}\sloppy

\date{\today. File: \jobname.tex}

\keywords{$q$-Ornstein--Uhlenbeck process, Markov process, double-sum method, Poisson limit theorem, mixing condition, extreme value theory}
\subjclass[2010]{Primary, 60G17, 60G70}

\begin{abstract}We continue the investigation of sample paths of $q$-Ornstein--Uhlenbeck processes. We show that for each $q\in(-1,1)$, the process has big jumps crossing from near one end point of the domain to the other with positive probability. Moreover, the number of such jumps in an appropriately enlarged window converges weakly to a Poisson random variable. 
\end{abstract}
\maketitle

\section{Introduction}
In this paper, we continue the investigation of path properties of $q$-Gaussian processes started in \citep{bryc16local}.
We focus on the so-called $q$-Ornstein--Uhlenbeck process for $q\in(-1,1)$. These are stationary classical Markov processes corresponding to the processes with the same name  arising first from non-commutative probability \citep{biane98processes,bozejko97qGaussian}. 
In classical probability theory, they turned out to be closely related to the so-called quadratic harnesses, which are continuous-time stochastic processes characterized by expressions of conditional mean and variance given the past and future \citep{bryc05conditional,bryc07quadratic}.

The present paper takes a purely probabilistic point of view of $q$-Ornstein--Uhlenbeck processes, and in particular studies their path properties.
There exists already a vast literature on path properties of stochastic processes on  various aspects, including regularity of sample paths, fractal properties, extremes, excursions and overshoots. The extensively investigated families of processes include notably Gaussian processes, L\'evy processes,  Markov processes and stable processes, among others. See for example \citep{adler90introduction,xiao04random,morters10brownian,piterbarg96asymptotic,samorodnitsky94stable,bertoin96levy,xiao09sample} and references therein.
The $q$-Ornstein--Uhlenbeck processes, however, present some intriguing features that distinguish them from most of the well studied processes so far, as we shall see below. 

For each $q\in(-1,1)$, the  $q$-Ornstein--Uhlenbeck process is a stationary Markov process with domain $[-2/\sqrt{1-q},2/\sqrt{1-q}]$, and it has explicit probability density function and transition probability density function (see~\eqref{eq:p} and~\eqref{eq:pst} below). 
%The distribution of jumps cannot be read directly from these density functions. Nevertheless, all our analysis are based on them. 
It is known that as $q\uparrow 1$, the process converges weakly to the standard Ornstein--Uhlenbeck  process, a stationary Gaussian Markov process with continuous sample paths. However, for each $q\in(-1,1)$, the sample paths are known to be discontinuous. \citet{szablowski12qwiener} first asked a series of questions on path properties of $q$-Ornstein--Uhlenbeck processes. In the previous paper \citep{bryc16local}, we showed that for each fixed $q\in(-1,1)$, locally the process has small jumps and behaves as a Cauchy process, in the framework of tangent processes \citep{falconer03local}. %In words, even for $q$ very close to 1, the local structure of sample paths of $q$-Ornstein--Uhlenbeck processes are drastically different from the Ornstein--Uhlenbeck process. 
This paper continues and complements the investigation by looking at large jumps. We shall prove that with strictly positive probability, there are large jumps crossing from one side of the bounded support to the other side. We describe the asymptotic law of the number of such jumps in the form of a Poisson limit theorem. 
All our analysis are based on the explicit formula of transition probability density function, although the distribution of jumps cannot be derived from it directly.

We first review the local structure revealed in \citep{bryc16local}.
It is shown in \citep[Section 4]{szablowski12qwiener} that the $q$-Ornstein--Uhlenbeck process has a version in $D([0,\infty))$, the space of c\`adl\`ag functions on $[0,\infty)$.  
Let $X\topp q= \{X\topp q_t\}_{t\ge0}$ denote a $q$-Ornstein--Uhlenbeck process, and we assume $X\topp q\in D([0,\infty))$ with probability one. 
In \citep{bryc16local}, we showed that the local structure of $X\topp q$ can be characterized via the notion of tangent processes \citep{falconer03local}. Namely,  for all $x\in(-2/\sqrt{1-q},2/\sqrt{1-q})$, under the law $\proba(\cdot\mid X_0 = x)$, as $\epsilon\downarrow0$,
\[
\ccbb{\frac{X_{\epsilon t}-X_0}{\epsilon}}_{t\ge 0}
\]
converges weakly in $D([0,\infty))$ to a Cauchy process, with appropriate scaling depending on $x$ and $q$. For $x = \pm 2/\sqrt{1-q}$, under a different scaling $\epsilon^2$ instead of $\epsilon$, the limiting tangent process is a different self-similar process, closely related to so-called $1/2$-stable Biane process. 
In words, the tangent processes characterize the local small jumps of the original $q$-Ornstein--Uhlenbeck process. It is remarkable that although as $q\uparrow 1$ the $q$-Ornstein--Uhlenbeck process converges weakly to the standard Ornstein--Uhlenbeck process, which has continuous sample paths with probability one, for all $q\in(-1,1)$ the $q$-Ornstein--Uhlenbeck process has small local jumps.  

In this paper, we study big jumps of the same processes.  Namely, with $q$ fixed, we consider the number of large jumps from
within  
$\epsilon$-margin of the lower boundary to within 
$\epsilon$-margin of the upper boundary of the domain during the period $(a,b]$:
\[
N\topp q((a,b],\epsilon) := \abs{\ccbb{t\in(a,b]: X_{t-}\topp q<-\frac2{\sqrt{1-q}}+\epsilon, X_t\topp q>\frac2{\sqrt{1-q}}-\epsilon}}.
\]
Our main result is the following.
\begin{Thm}\label{thm:1}
For $q\in(-1,1)$, as $\epsilon\downarrow 0$, $N\topp q((0,\epsilon^{-3}],\epsilon)$ converges weakly to a Poisson random variable with parameter 
\[
\alpha_q:
= \frac1{18\pi^2}(1-q)^{3/2}\prodd k1\infty\frac{(1-q^k)^7}{(1+q^k)^4}.
\]

\end{Thm}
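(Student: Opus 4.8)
The plan is to realize $N\topp q((0,\epsilon^{-3}],\epsilon)$ as a sum of nearly independent rare indicators and invoke a Poisson limit theorem for triangular arrays. Writing $A_\epsilon=(-2/\sqrt{1-q},-2/\sqrt{1-q}+\epsilon)$ and $B_\epsilon=(2/\sqrt{1-q}-\epsilon,2/\sqrt{1-q})$ for the two margins, I would partition $(0,\epsilon^{-3}]$ into $M=M(\epsilon)$ consecutive blocks of length $\delta=\epsilon^{-3}/M$, separated by short guard gaps, and let $\xi_i$ be the indicator that a crossing jump from $A_\epsilon$ to $B_\epsilon$ occurs in the $i$-th block. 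The three ingredients driving the argument are then: (i) the exact first-order intensity of crossing jumps, giving $\E[N\topp q]\to\alpha_q$; (ii) an anti-clustering estimate showing that two or more crossings in one short block, or in nearby blocks, are negligible; and (iii) exponential mixing of $X\topp q$, which decouples well-separated blocks so that $\sum_i\xi_i$ behaves like a sum of independent Bernoulli variables with total mean $\alpha_q$.

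The heart of the computation is ingredient (i). Since $A_\epsilon$ and $B_\epsilon$ are separated by a fixed positive distance, crossing jumps are genuine big jumps whose rate is governed by the L\'evy system of the process: the jump kernel $J(x,y)=\lim_{t\downarrow0}t^{-1}p_t(x,y)$ for $x\neq y$, extracted from the explicit transition density of \eqref{eq:pst}. By stationarity the compensation formula gives, exactly, $\E[N\topp q((0,T],\epsilon)]=T\int_{A_\epsilon}\int_{B_\epsilon}f(x)J(x,y)\,\d y\,\d x=:T\lambda_\epsilon$, where $f$ is the stationary density of \eqref{eq:p}. To obtain $J$, I would write $p_t$ through the $q$-Poisson--Mehler kernel with correlation $\rho=e^{-t}$, substitute $x=\tfrac{2}{\sqrt{1-q}}\cos\theta$ and $y=\tfrac2{\sqrt{1-q}}\cos\phi$, and let $\rho\uparrow1$: the numerator $(\rho^2;q)_\infty$ contributes $(1-\rho^2)(q;q)_\infty\sim 2t\,(q;q)_\infty$, while the denominator tends to $\prod_{k\ge0}|1-q^ke^{i(\theta+\phi)}|^2|1-q^ke^{i(\theta-\phi)}|^2$, which at the corner $\theta\to\pi$, $\phi\to0$ equals $16\prod_{k\ge1}(1+q^k)^4$. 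The scaling $\lambda_\epsilon\sim\alpha_q\epsilon^3$ then comes from the boundary behaviour of $f$: in the $\theta$-variable the stationary density carries a $\sin^2\theta$ factor, so it vanishes quadratically near each boundary ($\propto u^2$ for $\theta=\pi-u$, and $\propto\phi^2$ for $\phi$ near $0$); a margin of width $\epsilon$ corresponds to $u,\phi\in(0,(1-q)^{1/4}\epsilon^{1/2})$, so each angular integral contributes $\propto\epsilon^{3/2}$ and their product gives $\epsilon^3$. Assembling the constants — the short-time numerator $2(q;q)_\infty$, the corner denominator $16\prod_{k\ge1}(1+q^k)^4$, the squared density prefactor $\pp{\tfrac2\pi(q;q)_\infty^3}^2$, and the squared cubic factor $\pp{\tfrac13}^2$ from the integrals $\int_0^{u_\epsilon}u^2\,\d u$ — collapses to $\frac1{18\pi^2}(1-q)^{3/2}\prod_{k\ge1}\frac{(1-q^k)^7}{(1+q^k)^4}=\alpha_q$, which is exactly what fixes the window scale $\epsilon^{-3}$.

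For ingredients (ii) and (iii) I would combine the double-sum method with the spectral decay of the $q$-Ornstein--Uhlenbeck semigroup, whose transition operator has the $q$-Hermite polynomials as eigenfunctions with geometrically decaying eigenvalues, yielding a spectral gap and hence exponential mixing. Mixing lets me replace the dependent indicators $\{\xi_i\}$ by independent ones with the same marginals up to an error that vanishes once the guard gaps grow slowly; the classical Poisson convergence criterion for triangular arrays (total mean converging to $\alpha_q$, maximal cell probability vanishing) then delivers the limit. The anti-clustering step is the delicate one: I must bound $\sum_i\proba(\xi_i=1,\text{ a second crossing in block }i)$ and the nearby-block cross terms by controlling the two-time probability $\proba(X\topp q_{s-}\in A_\epsilon, X\topp q_s\in B_\epsilon, X\topp q_{s'-}\in A_\epsilon, X\topp q_{s'}\in B_\epsilon)$ for $s,s'$ close, uniformly in $\epsilon$, which requires bounds on the transition density near the boundary rather than merely its short-time limit.

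The main obstacle I expect is precisely this anti-clustering control. Unlike the intensity, which is a clean first-order L\'evy-system computation, ruling out clusters of crossing jumps forces quantitative two-time estimates for $p_t(x,y)$ that stay sharp as both $t\downarrow0$ and $x,y$ approach opposite boundaries, so the interplay between the quadratic boundary vanishing of the density and the short-time singularity of the kernel must be handled with care. Establishing these uniform bounds — and verifying that the inevitable background of small jumps of $X\topp q$, which locally resembles a Cauchy process, cannot conspire to create spurious or clustered crossings — is the crux of the argument; once it is in place, the Poisson limit follows from the standard machinery above.
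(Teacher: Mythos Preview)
Your proposal is correct and follows essentially the same three-part strategy as the paper: compute the asymptotic intensity of crossing jumps, establish exponential mixing, and apply a Poisson limit theorem for dependent indicators, with the double-sum method handling anti-clustering. The differences are in implementation rather than substance. You compute the intensity via the L\'evy system, taking $J(x,y)=\lim_{t\downarrow0}t^{-1}p_t(x,y)$ and integrating against the stationary density; the paper instead discretizes time at mesh $2^{-n}$, computes the one-step crossing probability directly from the transition density~\eqref{eq:pst}, and lets $n\to\infty$ --- this sidesteps any appeal to general L\'evy-system theory for Feller processes and lands on the same integral you write down in angular coordinates. For mixing, you invoke the spectral gap of the $q$-Hermite expansion; the paper uses instead an explicit two-sided bound on $p_{0,t}(x,y)/p(y)$ due to Szab\l owski to obtain $\psi$-mixing with rate $C_qe^{-m}$. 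Finally, the paper takes fixed unit-length blocks $(i-1,i]$ with no guard gaps and applies Chen's Poisson limit theorem for $\phi$-mixing sequences directly, which packages the decoupling and the triangular-array step together; your scheme with variable block length and guard gaps would also work but is slightly less economical here. The anti-clustering estimate you correctly flag as the crux is exactly what the paper controls via the uniform lower bounds~\eqref{eq:phi1}--\eqref{eq:phi2} on the factors $\varphi_{q,k}$, which keep the two-time kernel bounded when the arguments sit near opposite boundaries and $t$ is small; these are precisely the ``quantitative two-time estimates'' you anticipate needing.
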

This result reveals another qualitative behavior of the $q$--Ornstein--Uhlenbeck process for all $q<1$: the process may have big jumps crossing almost the entire domain, although the frequency of such jumps decreases to zero as $q\uparrow 1$. At the same time, in contrast to the tangent process that characterizes local jumps, the theorem above characterizes big jumps by looking at the process globally: such big jumps can only be observed with non-negligible probability over an increasing time window, as over any fixed window the probability of having at least one such jump vanishes as $\epsilon\downarrow 0$: we will  first show that the order of this probability is $\epsilon^3$.

We prove the main result in two steps. In Section~\ref{sec:bigjumps} we compute the asymptotic probability of having at least one large jump during the interval $(0,1]$, by the double-sum method. In Section~\ref{sec:Poisson} we apply a Poisson limit theorem established by \citet{chen75poisson} for $\phi$-mixing sequence of random variables. 
These tools are nowadays standard techniques in extreme value theory for stationary processes, see for example \citep{aldous89probability,leadbetter83extremes}. However, applications to asymptotic probability of large jumps of stochastic processes, as the problem investigated in this paper, are rarely seen in the literature. We also point out that all the descriptions of jumps obtained so far, local and global, are asymptotic:  we do not know any precise descriptions yet as the distribution of jumps of L\'evy processes, or the distribution of passage across a given level of subordinators \citep{bertoin96levy}. Most questions raised originally in \citep{szablowski12qwiener}  remain still open.

We conclude the introduction with a few formulas on the $q$-Ornstein--Uhlenbeck process that will be used in the paper. 
For each $q\in(-1,1)$,  $X\topp q$ is a stationary Markov process with domain $[-2/\sqrt{1-q},2/\sqrt{1-q}]$. Its marginal  probability density function $p\topp q(x)$ and the  transition probability density function $p\topp q_{s,t}(x,y)$ are given by, for $x,y\in[-2/\sqrt{1-q},2/\sqrt{1-q}]$,
\equh\label{eq:p}
p\topp q(x) = \frac{\sqrt{1-q}\cdot(q)_\infty}{2\pi}\sqrt{4-(1-q)x^2}\prodd k1\infty\bb{(1+q^k)^2-(1-q)x^2q^k},
\eque
\equh\label{eq:pst}
p\topp q_{s,t}(x,y) = (e^{-2(t-s)};q)_\infty \prodd k0\infty\frac1{\varphi_{q,k}(t-s,x,y)} \cdot p(y),
\eque
with
\[
\varphi_{q,k}(\delta,x,y) = (1-e^{-2\delta}q^{2k})^2 - (1-q)e^{-\delta}q^k(1+e^{-2\delta}q^{2k})xy + (1-q)e^{-2\delta}q^{2k}(x^2+y^2).
\]
Here and below, we write
\[
(a;q)_\infty := \prodd k0\infty(1-aq^k) \qmand (q)_\infty:=(q;q)_\infty,\mfa  a\in\R, q\in(-1,1).
\]
%The marginal distribution of $q$-Ornstein--Uhlenbeck process is also known as the orthogonality measure of the $q$-Hermite polynomials \citep{ismail09classical}. Furthermore, 
%The $q$-Ornstein--Uhlenbeck process is also closely related to the so-called $q$-Brownian motion $W\topp q$, via the transformation $X\topp q_t = e^{-t}W\topp q_{e^{2t}}$, for $q\in(-1,1)$.
%The $q$-Brownian motions also have their root in non-commutative probability \citep{biane98processes,bozejko97qGaussian}; at the same time, they arise as a sub-class of the so-called harness processes \citep{bryc05conditional}.
For more background, see \citet{szablowski12qwiener} and references therein.
\section{Asymptotic probability of large jumps}\label{sec:bigjumps}
Consider the event of the process having at least one big jump in $(0,1]$:
\begin{multline*}
A\topp q(\epsilon):= \ccbb{N\topp q((0,1],\epsilon)\ge 1}\\ 
= \ccbb{\exists t\in(0,1]: X\topp q_{t-}< -\frac2{\sqrt{1-q}}+\epsilon, X_t\topp q>\frac2{\sqrt{1-q}}-\epsilon}.
\end{multline*}
\begin{Prop}\label{prop:bigjumps}
For all $q\in(-1,1)$, 
\[
\lim_{\epsilon\downarrow0}\frac{\proba(A\topp q(\epsilon))}{\epsilon^3} = \alpha_q. %= \frac1{18\pi^2}(1-q)^{3/2}\prodd k1\infty\frac{(1-q^k)^7}{(1+q^k)^4}=:\alpha_q.
\]
\end{Prop}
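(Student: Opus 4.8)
The plan is to prove Proposition~\ref{prop:bigjumps} by the double-sum method, comparing the jump event $A\topp q(\epsilon)$ with a discretized family of two-point crossing events. Write $\ell := 2/\sqrt{1-q}$ and let $A_\epsilon := [-\ell,-\ell+\epsilon)$ and $B_\epsilon := (\ell-\epsilon,\ell]$ be the two margin regions. Partition $(0,1]$ into $n$ intervals of length $\delta = 1/n$ and set
\[
E_i := \ccbb{X\topp q_{(i-1)/n}\in A_\epsilon,\ X\topp q_{i/n}\in B_\epsilon}, \qquad i = 1,\dots,n.
\]
The Bonferroni inequalities give, for every $n$,
\[
\sum_{i=1}^n\proba(E_i) - \sum_{1\le i<j\le n}\proba(E_i\cap E_j)\ \le\ \proba\pp{\bigcup_{i=1}^nE_i}\ \le\ \sum_{i=1}^n\proba(E_i).
\]
First I would show that as $n\to\infty$ the middle term converges to $\proba(A\topp q(\epsilon))$: since $X\topp q$ is c\`adl\`ag and (as follows below) has only finitely many $A_\epsilon\to B_\epsilon$ jumps in $(0,1]$ almost surely, every such jump is eventually recorded by exactly one $E_i$, while crossings of the domain built up from several intermediate jumps stop registering as the mesh shrinks. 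Granting in addition that the single sum converges to a limit $\iota_\epsilon$ and the double sum to a limit $D_\epsilon$, the display collapses to $\iota_\epsilon - D_\epsilon\le\proba(A\topp q(\epsilon))\le\iota_\epsilon$, so it suffices to prove $\iota_\epsilon = \alpha_q\epsilon^3 + o(\epsilon^3)$ and $D_\epsilon = o(\epsilon^3)$.

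For the single sum, stationarity gives $\sum_i\proba(E_i) = \delta^{-1}\proba(X\topp q_0\in A_\epsilon,\ X\topp q_\delta\in B_\epsilon)$. On $A_\epsilon\times B_\epsilon$ the arguments stay away from the diagonal, so every $\varphi_{q,k}(\delta,x,y)$ is bounded away from $0$; together with $(e^{-2\delta};q)_\infty = (1-e^{-2\delta})\prodd k1\infty(1-e^{-2\delta}q^k)\sim 2\delta\,(q)_\infty$ this yields, uniformly on $A_\epsilon\times B_\epsilon$,
\[
\frac1\delta p\topp q_{0,\delta}(x,y)\longrightarrow 2\,(q)_\infty\prodd k0\infty\frac1{\varphi_{q,k}(0,x,y)}\,p\topp q(y)
\qmmas\delta\downarrow0,
\]
whence $\iota_\epsilon = 2(q)_\infty\int_{A_\epsilon}\int_{B_\epsilon}p\topp q(x)\prod_{k\ge0}\varphi_{q,k}(0,x,y)^{-1}p\topp q(y)\,dy\,dx$. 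To extract the leading order I would expand near the corner $(x,y)=(-\ell,\ell)$: from \eqref{eq:p}, $4-(1-q)x^2\sim 4\sqrt{1-q}\,(x+\ell)$ and $\prodd k1\infty[(1+q^k)^2-(1-q)x^2q^k]\to(q)_\infty^2$ there, so $p\topp q(x)\sim \pi^{-1}(1-q)^{3/4}(q)_\infty^3\sqrt{x+\ell}$, and symmetrically in $y$; a direct check gives $\varphi_{q,k}(0,-\ell,\ell) = (1+q^k)^4$. Using $\int_0^\epsilon\sqrt u\,du = \tfrac23\epsilon^{3/2}$ in each variable produces the factor $\pp{\tfrac23\epsilon^{3/2}}^2 = \tfrac49\epsilon^3$, and collecting the constants yields exactly $\iota_\epsilon\sim\alpha_q\epsilon^3$ with $\alpha_q$ as stated.

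For the double sum, note first that $E_i$ and $E_{i+1}$ are disjoint, since together they would force $X\topp q_{i/n}$ to lie in both $A_\epsilon$ and $B_\epsilon$. For $j\ge i+2$ I would expand $\proba(E_i\cap E_j)$ by the Markov property into an iterated integral of $p\topp q$ and the intervening transition densities over $A_\epsilon$ and $B_\epsilon$. Pairs with $j-i$ bounded involve at least three short-time transitions between far-apart regions, each contributing a factor $O(\delta)$ and each margin integral a factor $O(\epsilon^{3/2})$, so their total is $o(1)$ as $\delta\downarrow0$; widely separated pairs factorize up to a mixing correction controlled by the exponential decay of $p\topp q_{0,\tau}(x,y)-p\topp q(y)$ in $\tau$, and contribute $O(\epsilon^6)$ after summation. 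Altogether $D_\epsilon = O(\epsilon^6) = o(\epsilon^3)$, which closes the squeeze.

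The main obstacle is the first reduction, namely identifying the $\delta\downarrow0$ limit of $\proba(\bigcup_iE_i)$ with $\proba(A\topp q(\epsilon))$: because the process has infinitely many small jumps, one must rule out that a discrete crossing from $A_\epsilon$ to $B_\epsilon$ is manufactured by an accumulation of smaller jumps rather than a single big jump, and conversely verify that every genuine big jump is captured. This rests on the c\`adl\`ag structure and on a.s. finiteness of the number of such jumps, itself a by-product of $\iota_\epsilon<\infty$. By contrast, the single-sum asymptotics and the double-sum bound are essentially calculus and routine transition-density estimates once the expansion $(e^{-2\delta};q)_\infty\sim2\delta(q)_\infty$ and the square-root boundary behavior of $p\topp q$ are in hand.
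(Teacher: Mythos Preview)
Your approach is essentially the paper's: discretize, apply Bonferroni/double-sum to the two-point crossing events, compute the single-sum via $(e^{-2\delta};q)_\infty\sim 2\delta(q)_\infty$ and the boundary expansion of $p\topp q$, and show the double sum is $O(\epsilon^6)$. Two technical points are handled more cleanly in the paper than in your sketch.

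First, the step you flag as ``the main obstacle'' --- identifying $\lim_{n}\proba(\bigcup_iE_i)$ with $\proba(A\topp q(\epsilon))$ --- is dispatched in the paper without any jump-counting argument. One uses a dyadic grid and observes that the piecewise-constant interpolation $\wt X\topp n$ converges to $X$ in $D([0,1])$ almost surely (this is standard for c\`adl\`ag paths; see Ethier--Kurtz). Since the event ``some jump goes from $A_\epsilon$ into $B_\epsilon$'' is, for $\epsilon$ fixed, a functional continuous at a.e.\ sample path, dominated convergence gives $\proba(A\topp q(\epsilon))=\lim_n\proba(A_n(\epsilon))$ directly. Your worry about ``manufactured'' crossings from accumulated small jumps is absorbed into Skorokhod continuity rather than handled by hand.

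Second, your double-sum estimate is more elaborate than needed. No short/long split and no appeal to mixing are required. For $j\ge i+2$, in the iterated integral the middle transition runs from $y_2\in B_\epsilon$ to $y_3\in A_\epsilon$, so $|y_2-y_3|\ge 4/\sqrt{1-q}-2\epsilon$ is bounded away from $0$. The lower bound $\varphi_{q,0}(\tau,y_2,y_3)\ge e^{-2\tau}(1-q)(y_2-y_3)^2$, together with $(e^{-2\tau};q)_\infty\le(-1;|q|)_\infty$ and the $k\ge1$ factors bounded via $\varphi_{q,k}\ge(1-|q|^k)^4$, gives a \emph{single} uniform bound
\[
\proba(E_i\cap E_j)\le C_{q,\epsilon_0}\,e^{2(j-i-1)\delta}\,\proba(E_1)^2,
\]
and since $(j-i-1)\delta\le 1$ on the grid one sums directly to $\sum_{i\ne j}\proba(E_i\cap E_j)\le C'\bigl(n\proba(E_1)\bigr)^2\to C'\iota_\epsilon^2=O(\epsilon^6)$. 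This replaces your two-regime argument in one line.
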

\begin{proof}
We fix $q\in(-1,1)$ and drop the sup-scripts of $A\topp q$, $X\topp q$ $p\topp q$ and $p_{s,t}\topp q$ for the sake of simplicity. 
We first approximate the event of interest $A(\epsilon)$ by events of the
discretized processes. Namely, introduce
\[
A_n(\epsilon):= \ccbb{X_{\frac i{2^n}}<-\frac2{\sqrt{1-q}}+\epsilon, X_{\frac{i+1}{2^n}}>\frac2{\sqrt{1-q}}-\epsilon, \mbox{ for some } i=0,\dots,2^n-1}.
\]
Let $\wt X\topp n$ be the discretized process of $X$ defined by setting $\wt X\topp n_{i/2^n} := X_{i/2^n}$ for $i=0,\dots,2^n$, and $\wt X\topp n_t$
piecewise constant over the interval $[i/2^n,(i+1)/2^n)$ for each $i=0,\dots,2^n-1$. One can show that $\wt X\topp n\to X$ as $n\to\infty$ in $D([0,1])$
 almost surely \citep[page 151, Problem 12]{ethier86markov}. Therefore, by the dominated convergence theorem,
 \[
 \proba(A(\epsilon)) = \limn\proba(A_n(\epsilon)), \mfa \epsilon>0,
 \]
and it suffices to show
\equh\label{eq:limitAn}
\lim_{\epsilon\downarrow0}\frac1{\epsilon^3}\limn{
\proba(A_n(\epsilon))} = \alpha_q.
\eque
We prove this
 by the double-sum method (e.g.~\citep{aldous89probability,piterbarg96asymptotic}). Introduce
\[
B_{i,n}(\epsilon) := \ccbb{X_{\frac{i-1}{2^n}}<-\frac2{\sqrt{1-q}}+\epsilon, X_{\frac i{2^n}}>\frac2{\sqrt{1-q}}-\epsilon}, i=1,\dots,2^n.
\]
Then $A_n(\epsilon)=\bigcup_{i=1}^{2^n}B_{i,n}(\epsilon)$, and
\[
\summ i1{2^n}\proba(B_{i,n}(\epsilon)) - \sum_{\substack{1\leq i,j\leq{2^n}\\ i\neq j}}\proba(B_{i,n}(\epsilon)\cap B_{j,n}(\epsilon)) \leq P(A_n(\epsilon))\leq \summ i1{2^n}\proba(B_{i,n}(\epsilon)).
\]
Now to prove~\eqref{eq:limitAn}, it suffices to establish
\equh\label{eq:Bin}
\lim_{\epsilon\downarrow0} \frac1{\epsilon^3}\limn\summ i1{2^n}\proba(B_{i,n}(\epsilon)) = \alpha_q,
\eque
and
\equh
\lim_{\epsilon\downarrow0} \frac1{\epsilon^3}\limsupn\sum_{i\neq j}\proba(B_{i,n}(\epsilon)\cap B_{j,n}(\epsilon)) = 0.\label{eq:Bijn}
\eque
We first show~\eqref{eq:Bin}. By stationarity it is equivalent to compute $2^n\proba(B_{1,n}(\epsilon))$. Write $\delta:=1/2^n$. By definition,
\begin{multline*}
\proba(B_{1,n}(\epsilon)) = \int_{-\frac2{\sqrt{1-q}}}^{-\frac2{\sqrt{1-q}}+\epsilon}p(y_1)\int_{\frac2{\sqrt{1-q}}-\epsilon}^{\frac2{\sqrt{1-q}}}p_{0,\delta}(y_1,y_2)dy_2dy_1\\
= \int_{-\frac2{\sqrt{1-q}}}^{-\frac2{\sqrt{1-q}}+\epsilon}\int_{\frac2{\sqrt{1-q}}-\epsilon}^{\frac2{\sqrt{1-q}}}p(y_1)p(y_2) {(e^{-2\delta};q)_\infty \prodd k0\infty\frac1{\varphi_{q,k}(\delta,y_1,y_2)}}dy_2dy_1.
\end{multline*}
It is clear that $(e^{-2\delta};q)_\infty\sim 2\delta\cdot(q)_\infty$ as $\delta\downarrow0$, and 
\begin{multline*}
\lim_{\delta\downarrow0}\varphi_{q,k}(\delta,y_1,y_2) \\
=  (1-q^{2k})^2-(1-q)q^k(1+q^{2k})y_1y_2+(1-q)q^{2k}(y_1^2+y_2^2) =:\psi_{q,k}(y_1,y_2).
\end{multline*}
We have shown in \citep{bryc16local} that
\equh\label{eq:phi1}
\min_{|x|,|y|\leq \frac2{\sqrt{1-q}}}\varphi_{q,k}(\delta,x,y) = (1-e^{-\delta}q^k)^4\geq (1-|q|^k)^4, k\in\N_0, \delta>0,
\eque
and
\equh\label{eq:phi2}
\varphi_{q,0}(\delta,x,y) 
\ge  e^{-2\delta}\bb{16\sinh^4(\delta/2) + (1-q)(x-y)^2}.
\eque
It follows from the dominated convergence theorem, applied to the logarithm of the infinite product, that
\[
\lim_{\delta\downarrow0}\prodd k0\infty\frac1{\varphi_{q,k}(\delta,y_1,y_2)}
=\prodd k0\infty \frac1{\psi_{q,k}(y_1,y_2)}
=:\Psi_q(y_1,y_2).
\]
By inequalities~\eqref{eq:phi1},~\eqref{eq:phi2} and $(e^{-2\delta};q)_\infty\leq \prodd k0\infty(1+|q|^k) = (-1;|q|)_\infty$, we obtain 
\[
 \sup_{\substack{y_1\leq -2/\sqrt{1-q}+\epsilon\\ y_2\geq 2/\sqrt{1-q}-\epsilon}}{(e^{-2\delta};q)_\infty \prodd k0\infty\frac1{\varphi_{q,k}(\delta,y_1,y_2)}}
 \leq \frac{(-1;|q|)_\infty\cdot e^2}{4(1-q)(\frac2{\sqrt{1-q}}-\epsilon)^2(|q|)_\infty^4}<\infty
\]
for all $n\leq \N, \epsilon<2/\sqrt{1-q}$. Therefore, by the dominated convergence theorem again, for fixed $\epsilon\in(0,2/\sqrt{1-q})$,
\[%\label{eq:B1n:1}
\limn 2^n\proba(B_{1,n}(\epsilon)) = 2\cdot(q)_\infty\int_{-\frac2{\sqrt{1-q}}}^{-\frac2{\sqrt{1-q}}+\epsilon}\int_{\frac2{\sqrt{1-q}}-\epsilon}^{\frac2{\sqrt{1-q}}}p(y_1)p(y_2) \Psi_q(y_1,y_2)dy_2dy_1.
\]
Observe that as $y_1\downarrow-2/\sqrt{1-q}$ and $y_2\uparrow 2/\sqrt{1-q}$,
\[
\Psi_q(y_1,y_2) \uparrow
 \prodd k0\infty\frac1{(1-q^{2k})^2+4q^k(1+q^{2k})+8q^{2k}}= \prodd k0\infty\frac{1}{(1+q^k)^4}.
\]
It then follows that as $\epsilon\downarrow0$,
\equh\label{eq:B1n:2}
\limn 2^n\proba(B_{1,n}(\epsilon))\sim \frac18\prodd k1\infty\frac{(1-q^k)}{(1+q^k)^4}\pp{\int_{-\frac2{\sqrt{1-q}}}^{-\frac2{\sqrt{1-q}}+\epsilon}p(y)dy}^2
\eque
with
\begin{multline}\label{eq:B1n:3}
\int_{-\frac2{\sqrt{1-q}}}^{-\frac2{\sqrt{1-q}}+\epsilon}p(y)dy\\
 = \frac{\sqrt{1-q}\cdot(q)_\infty}{2\pi}\int_{-\frac2{\sqrt{1-q}}}^{-\frac2{\sqrt{1-q}}+\epsilon}\sqrt{4-(1-q)y^2}\prodd k1\infty \bb{(1+q^k)^2-(1-q)y^2q^k}dy\\
\sim  \frac{(q)^3_\infty}{2\pi}\int_{-2}^{-2+\epsilon\sqrt{1-q}}\sqrt{4-y^2}dy\sim \frac{(q)_\infty^3}{2\pi}\frac43\pp{\epsilon\sqrt{1-q}}^{3/2}.
\end{multline}
Combining~\eqref{eq:B1n:2} and~\eqref{eq:B1n:3},~\eqref{eq:Bin} follows.

Finally, we show~\eqref{eq:Bijn}. Observe that for all $\ell\ge 2$,
\begin{align*}
\proba ( & B_{0,n}(\epsilon) \cap  B_{\ell,n}(\epsilon))
 =  \int_{-\frac2{\sqrt{1-q}}}^{-\frac2{\sqrt{1-q}}+\epsilon} p(y_1)\int_{\frac2{\sqrt{1-q}}-\epsilon}^{\frac2{\sqrt{1-q}}}p_{0,\delta}(y_1,y_2)\\
  & \quad \times \int_{-\frac2{\sqrt{1-q}}}^{-\frac2{\sqrt{1-q}}+\epsilon} p_{0,(\ell-1)\delta}(y_2,y_3)\int_{\frac2{\sqrt{1-q}}-\epsilon}^{\frac2{\sqrt{1-q}}}p_{0,\delta}(y_3,y_4)dy_4dy_3dy_2dy_1\\
  & = \int_{-\frac2{\sqrt{1-q}}}^{-\frac2{\sqrt{1-q}}+\epsilon}\int_{\frac2{\sqrt{1-q}}-\epsilon}^{\frac2{\sqrt{1-q}}} p(y_1)p_{0,\delta}(y_1,y_2) {(e^{-2(\ell-1)\delta};q)_\infty}\prodd k0\infty\frac1{\varphi_{q,k}((\ell-1)\delta,y_2,y_3)}\\
  & \quad \times \int_{-\frac2{\sqrt{1-q}}}^{-\frac2{\sqrt{1-q}}+\epsilon}\int_{\frac2{\sqrt{1-q}}-\epsilon}^{\frac2{\sqrt{1-q}}} p(y_3)p_{0,\delta}(y_3,y_4)dy_4dy_3dy_2dy_1.
\end{align*}
The infinite product along with $(e^{-2(\ell-1)\delta};q)_\infty$ above, in the domain of the integration for all $\epsilon<\epsilon_0$,  by~\eqref{eq:phi1} and~\eqref{eq:phi2} is uniformly bounded by
\[
(-1;|q|)_\infty\cdot \frac{e^{2(\ell-1)\delta}}{(1-q)(y_3-y_2)^2+16\sinh^4((\ell-1)\delta/2)}\frac1{(|q|)_\infty^4}\leq C_{q,\epsilon_0}e^{2(\ell-1)\delta}
\]
 for some constant $C_{q,\epsilon_0}$. Since $B_{i,n}(\epsilon)\cap B_{i+1,n}(\epsilon)=\emptyset$ for $\epsilon<2/\sqrt{1-q}$,  we have 
\begin{multline*}
\sum_{\substack{1\leq i,j\leq 2^n\\i\neq j}}\proba(B_{i,n}(\epsilon)\cap B_{j,n}(\epsilon)) \\
\leq C_{q,\epsilon_0} \proba(B_{1,n}(\epsilon))^2\sum_{1\leq i,j\leq 2^n} e^{(|i-j|-1)/2^n}\leq C'_{q,\epsilon_0}[2^n\proba(B_{1,n})]^2
\end{multline*}
for another constant $C'_{q,\epsilon_0}<\infty$ uniformly in $n\in\N$ and $\epsilon\leq \epsilon_0$.
Now,~\eqref{eq:Bijn} follows from~\eqref{eq:Bin}. The proof is thus completed.
\end{proof}
\section{A Poisson limit theorem}\label{sec:Poisson}
Consider Bernoulli random variables indicating whether the process has at least one big jump in each interval $(i-1,i]$:
\[
J_i\topp q(\epsilon) :=  \ind_{\ccbb{N\topp q((i-1,i],\epsilon)\ge 1}}, i\in\N.
\]
In this way, $\{J_i\topp q(\epsilon)\}_{i\in\N}$ is a stationary sequence of Bernoulli random variables with success rate $\proba(J_i\topp q(\epsilon) = 1) = \proba(A\topp q(\epsilon))$.  Set
\[
W\topp q_\epsilon := \summ i1{T_\epsilon}J_i\topp q(\epsilon) \qmwith T_\epsilon := \ceil{\frac1{\epsilon^3}}, \epsilon>0.
\]
We first prove the following.
\begin{Prop}\label{prop:poisson}
For all $q\in(-1,1)$, as $\epsilon\downarrow 0$, the distribution of $W\topp q_\epsilon$ converges to the Poisson distribution with parameter $\alpha_q$.
\end{Prop}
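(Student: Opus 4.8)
The plan is to obtain the Poisson convergence from the Poisson limit theorem of \citet{chen75poisson} for $\phi$-mixing sequences, in the form standard in extreme value theory \citep{leadbetter83extremes,aldous89probability}. Set $\lambda_\epsilon := \esp W\topp q_\epsilon = T_\epsilon\,\proba(A\topp q(\epsilon))$. Since $T_\epsilon = \ceil{\epsilon^{-3}}$, Proposition~\ref{prop:bigjumps} gives $\lambda_\epsilon \to \alpha_q$ at once, so the Poisson law with mean $\lambda_\epsilon$ converges to that with mean $\alpha_q$, and it suffices to show that the total variation distance between the law of $W\topp q_\epsilon$ and the Poisson law with mean $\lambda_\epsilon$ tends to $0$. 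For this I would verify, uniformly in $\epsilon$, the two hypotheses of Chen's theorem for the stationary Bernoulli array $\{J_i\topp q(\epsilon)\}_i$: a long-range $\phi$-mixing condition and a local anti-clustering condition.

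Each $J_i\topp q(\epsilon)$ is measurable with respect to the path of $X\topp q$ on $(i-1,i]$, and $X\topp q$ is a stationary Markov process; hence, by the Markov property, the dependence between $\{J_1,\dots,J_k\}$ and $\{J_{k+d},J_{k+d+1},\dots\}$ is controlled by the transition density over a time gap of length $d-1$. From~\eqref{eq:pst} one checks that as $\delta\to\infty$ both $\varphi_{q,k}(\delta,x,y)\to1$ and $(e^{-2\delta};q)_\infty\to1$ uniformly for $x,y$ in the bounded domain, and more precisely that $p\topp q_{s,t}(x,y)/p\topp q(y)=1+O(e^{-(t-s)})$ uniformly in $x,y$. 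Consequently the $\phi$-mixing coefficient of the process satisfies $\phi(d)\le C_q e^{-d}$ with $C_q$ independent of $\epsilon$, which is far stronger than needed.

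For the anti-clustering condition I would bound the probability $\proba(J_1\topp q(\epsilon)=1,\,J_{1+d}\topp q(\epsilon)=1)$ of two big jumps in two distinct unit intervals, uniformly in the gap $d\ge1$. This is precisely the cross-term estimate of Proposition~\ref{prop:bigjumps}: discretizing both intervals and invoking~\eqref{eq:phi1}--\eqref{eq:phi2} as in the derivation of~\eqref{eq:Bijn}, the relevant kernel factor stays uniformly bounded for every time gap, so that $\proba(J_1\topp q(\epsilon)=1,\,J_{1+d}\topp q(\epsilon)=1)\le C_q\,\proba(A\topp q(\epsilon))^2=O(\epsilon^6)$ uniformly in $d\ge1$. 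Together with $T_\epsilon\sim\epsilon^{-3}$ this yields the clumping bound $\limsup_{\epsilon\downarrow0}\,T_\epsilon\sum_{d=2}^{\lfloor T_\epsilon/k\rfloor}\proba(J_1\topp q(\epsilon)=1,\,J_{1+d}\topp q(\epsilon)=1)\le C_q/k$, which vanishes as $k\to\infty$.

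With both ingredients in place, Chen's bound applies with a neighborhood of dependence of radius $m_\epsilon:=\ceil{4\log(1/\epsilon)}$: the two clumping terms are of order $m_\epsilon T_\epsilon \epsilon^6=O(\epsilon^3\log(1/\epsilon))$ and the mixing term is of order $T_\epsilon\,\phi(m_\epsilon)=O(\epsilon^{-3}e^{-m_\epsilon})=O(\epsilon)$, both tending to $0$; hence the total variation distance vanishes and, since $\lambda_\epsilon\to\alpha_q$, the proposition follows. The step I expect to be most delicate is making the pair bound genuinely uniform in the gap $d$ while simultaneously passing from the discretized approximations back to the continuous-time events $J_i\topp q(\epsilon)$: as in Proposition~\ref{prop:bigjumps} one must check that the crude kernel estimate, which carries a factor $e^{2(\ell-1)\delta}$, is absorbed by the growth of $16\sinh^4((\ell-1)\delta/2)$ at large time gaps, so that widely separated intervals contribute no spurious clustering.
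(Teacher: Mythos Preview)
Your proposal is correct and matches the paper's approach: Chen's (1975) $\phi$-mixing Poisson limit theorem together with the exponential mixing rate $\phi(m)\le C_qe^{-m}$, obtained from the uniform kernel estimate $p_{0,t}\topp q(x,y)/p\topp q(y)=1+O(e^{-t})$. The only cosmetic differences are that the paper states Chen's bound in the form involving $\var(W_\epsilon)-\lambda_\epsilon$ and controls the covariances directly via the $\psi$-mixing inequality $|\cov(J_1,J_\ell)|\le C_qe^{-(\ell-1)}(\esp J_1)^2$, whereas you bound the local pair probabilities $\proba(J_1=J_{1+d}=1)=O(\epsilon^6)$ by recycling the double-sum kernel estimate of Proposition~\ref{prop:bigjumps}, and the paper takes $m_\epsilon=\ceil{\epsilon^{-3+\delta}}$ rather than your logarithmic choice; neither distinction changes the argument.
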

We will apply the Poisson limit theorem for dependent random variables established by \citet{chen75poisson}.
To do so, we start by investigating the mixing dependence of the $q$-Ornstein--Uhlenbeck process. For this purpose we write
\[
\calJ_1 :=\sigma\pp{J_1\topp q(\epsilon)} \qmand \calJ_m^\infty:=\sigma\pp{\ccbb{J_k\topp q(\epsilon):k\ge m}}, m\in\N.
\]
\begin{Lem}\label{lem:mixing}
For all $q\in(-1,1)$, there exists a constant $C_q<\infty$, such that 
\equh\label{eq:exponential}
\abs{\frac{p_{0,t}\topp q(x,y)}{p\topp q(y)}-1}\leq C_qe^{-t} \mfa t\ge 1, x,y\in(-2/\sqrt{1-q},2/\sqrt{1-q}).
\eque
As a consequence, the sequence $\{J_i\topp q(\epsilon)\}_{i\in\N}$ is $\psi$-mixing in the sense that 
\equh\label{eq:psimixing}
|\proba(B\mid\calJ_1)-\proba(B)|\leq C_qe^{-m}\proba(B)\quad\quad \mfa B\in\calJ_{m+1}^\infty, m\in\N.
\eque
\end{Lem}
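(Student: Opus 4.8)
The plan is to establish the analytic estimate \eqref{eq:exponential} first, by a careful but essentially direct manipulation of the explicit formula \eqref{eq:pst}, and then to deduce the $\psi$-mixing bound \eqref{eq:psimixing} as a soft consequence via the Markov property. Dropping the superscript $q$, formula \eqref{eq:pst} gives
\[
\frac{p_{0,t}(x,y)}{p(y)} = (e^{-2t};q)_\infty\prodd k0\infty\frac1{\varphi_{q,k}(t,x,y)}.
\]
The crucial structural feature is that the factor $p(y)$ cancels, so the ratio carries no degeneracy as $y$ approaches the boundary, and I can hope for a bound uniform over the whole open domain. I would work with the logarithm of the right-hand side. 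For the prefactor, since $|e^{-2t}q^k|\le e^{-2}<1$ when $t\ge1$, the elementary inequality $|\log(1-u)|\le2|u|$ yields $\abs{\log(e^{-2t};q)_\infty}\le\frac2{1-|q|}e^{-2t}$. For the product, expanding $\varphi_{q,k}(t,x,y)-1$ and using that $|x|,|y|\le2/\sqrt{1-q}$ forces $(1-q)|xy|\le4$ and $(1-q)(x^2+y^2)\le8$, one sees that the dominant contribution is the term carrying $e^{-t}q^k xy$, so that
\[
\abs{\varphi_{q,k}(t,x,y)-1}\le C|q|^k e^{-t}\qmfor t\ge1,
\]
uniformly in $x,y$ and in $k\ge0$, for an absolute constant $C$.

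To pass from the per-factor estimate to the infinite product I would invoke the lower bound \eqref{eq:phi1}, which for $t\ge1$ gives $\varphi_{q,k}(t,x,y)\ge(1-e^{-1})^4\wedge(1-|q|)^4=:c_q>0$ for every $k\ge0$. Thus each factor lies in a fixed interval bounded away from $0$, on which $\log$ is Lipschitz with constant $1/c_q$; hence $\abs{\log\varphi_{q,k}(t,x,y)}\le c_q^{-1}C|q|^ke^{-t}$, and summing the geometric series gives $\sum_{k\ge0}\abs{\log\varphi_{q,k}(t,x,y)}\le C_q'e^{-t}$. Combining the two logarithmic estimates produces $\abs{\log\pp{p_{0,t}(x,y)/p(y)}}\le C_q''e^{-t}$ for all $t\ge1$; since this exponent is bounded (by $C_q''e^{-1}$), exponentiating through $|e^u-1|\le|u|e^{|u|}$ yields \eqref{eq:exponential}.

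For \eqref{eq:psimixing}, fix $B\in\calJ_{m+1}^\infty$, which is measurable with respect to the path on $(m,\infty)$, while $\{J_1=j\}$ depends only on the path on $(0,1]$. Set $h(y):=\proba(B\mid X_m=y)$. Conditioning successively on $X_m$ and on $X_1$, and using the Markov property together with stationarity,
\[
\proba(B\cap\{J_1=j\})=\E\bb{\ind_{\{J_1=j\}}\,G(X_1)},\qquad G(x):=\int p_{0,m-1}(x,y)h(y)\,dy,
\]
whereas $\proba(B)=\int p(y)h(y)\,dy$. For $m\ge2$ I apply \eqref{eq:exponential} with $t=m-1\ge1$ to get $\abs{p_{0,m-1}(x,y)-p(y)}\le C_qe^{-(m-1)}p(y)$, whence $\abs{G(x)-\proba(B)}\le C_qe^{-(m-1)}\proba(B)$ uniformly in $x$. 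Integrating against $\ind_{\{J_1=j\}}$, dividing by $\proba(J_1=j)$, and absorbing the factor $e$ into the constant gives $\abs{\proba(B\mid J_1=j)-\proba(B)}\le C_qe^{-m}\proba(B)$ for both $j\in\{0,1\}$, which is \eqref{eq:psimixing}. The single boundary case $m=1$, where the two windows abut and the time gap vanishes, is handled separately by a crude uniform bound guaranteeing that the $\psi$-mixing coefficient $\psi(1)$ is finite, which is then absorbed into $C_q$.

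I expect the main obstacle to be the uniform control of the infinite product in \eqref{eq:exponential}: one must simultaneously keep the per-factor error summable in $k$, supplied by the $q^k$ decay in the $xy$-term, and keep every factor bounded away from $0$, supplied by \eqref{eq:phi1}, so that the passage through the logarithm is legitimate uniformly in $x,y$ across the entire open domain, including arbitrarily close to the two endpoints. By contrast, the derivation of \eqref{eq:psimixing} is a routine consequence of the Markov property once \eqref{eq:exponential} is available, the only delicate point being the zero-gap case $m=1$.
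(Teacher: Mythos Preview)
Your proof is correct. For the analytic estimate \eqref{eq:exponential}, your route differs from the paper's: the paper cites an external two-sided bound on $p_{0,t}(x,y)/p(y)$ due to Szab{\l}owski (namely $(e^{-2t};q)_\infty/(e^{-t};q)_\infty^4$ from above and an explicit product $C(x,e^{-t},q)$ from below) and then expands each side as $1+O(e^{-t})$, whereas you work directly with the product formula \eqref{eq:pst}, controlling $|\varphi_{q,k}(t,x,y)-1|\le C|q|^ke^{-t}$ factor by factor, invoking \eqref{eq:phi1} for a uniform positive lower bound, and summing logarithms. Your argument is more self-contained and makes transparent why the bound is uniform in $x,y$; the paper's approach is shorter because the hard uniformity is hidden in the cited inequality. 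For the $\psi$-mixing consequence \eqref{eq:psimixing}, your derivation via the Markov property and conditioning on $X_1$ is essentially identical to the paper's, with the minor difference that you explicitly flag the boundary case $m=1$ as needing a separate crude bound; the paper's proof, which applies \eqref{eq:exponential} with $t=m-1$, is silent on this point and formally requires $m\ge2$.
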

\begin{proof}
 It is shown in \citep[Proposition 1, vii]{szablowski11structure} that
\equh\label{eq:szablowski}
C(x,e^{-t},q)\leq \frac{p_{0,t}\topp q(x,y)}{p\topp q(y)}\leq \frac{(e^{-2t};q)_\infty}{(e^{-t};q)_\infty^4} 
\eque
with
\[
C(x,\rho,q) = \frac{(\rho^2;q)_\infty}{\prodd k0\infty[(1+\rho^2q^{2k})^2+2(1-q)(1+\rho^2q^{k})|x\rho q^{2k}|+(1-q)\rho^2x^2q^{2k}]}.
\]
Observe that as $\rho\downarrow 0$, 
\[
(\rho;q)_\infty  = \exp\ccbb{\summ k0\infty\log(1-\rho q^k)} = \exp \ccbb{-\frac\rho{1-q}+O(\rho^2)} 
= 1-\frac\rho{1-q}+O(\rho^2),
\]
so 
\equh\label{eq:rhs}
\frac{(\rho^2;q)_\infty}{(\rho;q)_\infty^4}-1 = \frac{4\rho}{1-q}+O(\rho^2).
\eque
At the same time, bounding $\rho^mq^k$ by $\rho|q|^k$ from above for all $m,k\in\N$ in  the denominator of $C(x,\rho,q)$, we have
\equh\label{eq:lhs}
C(x,\rho,q)\geq \frac{(\rho^2;q)_\infty}{\prodd k0\infty[1+(7+8\sqrt 2)\rho q^k]} = 1-\frac{(7+8\sqrt 2)\rho}{1-|q|}+O(\rho^2).
\eque
Combining~\eqref{eq:rhs} and~\eqref{eq:lhs}, it follows that there exists $t_0$ large, such that for all $t>t_0$, 
\[
\abs{\frac{p_{0,t}\topp q(x,y)}{p\topp q(y)}-1}\leq Ce^{-t}
\]
for some large constant $C$. This and~\eqref{eq:szablowski} yield that the inequality remain true for all $t\ge 1$, by possibly increasing the value of the constant.

Now we prove the second part of the result. We drop the sup-scripts in $X\topp q$, $p\topp q$ and $p_{s,t}\topp q$ for the sake of simplicity. Recall that $X$ is a Markov process, and note that $\calX_1 := \sigma(\{X_t,0\leq t\leq 1\})\supset\calJ_1$. Then, we  observe that for all $B\in\calJ_{m+1}^\infty, m\in\N$,
\begin{multline*}
|\proba(B\mid\calJ_1)-\proba(B)| = |\esp[\proba(B\mid\calX_1)-\proba(B)\mid\calJ_1]|\\
=\abs{\esp\bb{\proba\pp{B\mid X_1}-\proba(B)\mmid\calJ_1}} \leq \esp\bb{\abs{\proba\pp{B\mid X_1}-\proba(B)}\mmid\calJ_1},
\end{multline*}
and that~\eqref{eq:exponential} yields a uniform upper bound 
\begin{multline*}
\abs{\proba(B\mid X_1= x) - \proba(B)}\\
 \leq \int \proba(B\mid X_m = y)p(y)\abs{\frac{p_{0,m-1}(x,y)}{p(y)}-1}dy 
\leq \proba(B)C_qe^{-m}.
\end{multline*}
The desired result follows.
\end{proof}
\begin{proof}[Proof of Proposition~\ref{prop:poisson}]
Fix $q\in(-1,1)$. For the sake of simplicity, we omit the sup-scripts in $W\topp q$ and $J\topp q_i$. 
Let $Z$ be a Poisson random variable with parameter $\alpha_q$. We  apply \citet[Theorem 4.1]{chen75poisson}, which says the following: if $\{J_i(\epsilon)\}_{i\in\N}$ satisfies
\equh\label{eq:phimixing}
|\proba(B\mid\calJ_1)-\proba(B)|\leq \phi(m) \quad\quad \mfa B\in\calJ_{m+1}^\infty, m\in\N,
\eque
then for all continuous function $h$ with $|h|\leq 1$ and $m\in\N$, 
\begin{multline}\label{eq:chen}
|\esp h(W_\epsilon) - \esp h(Z)| 
\leq 6\pp{\frac1{\sqrt {\lambda_\epsilon}}\wedge 1}\\
\times \bb{\var(W_\epsilon)-{\lambda_\epsilon}+ 2(2m+1)\summ i1{T_\epsilon}(\esp J_i(\epsilon))^2 + 4({\lambda_\epsilon}+1)T_\epsilon\phi(m+1)},
\end{multline}
with $\lambda_\epsilon := \summ i1{T_\epsilon}\esp J_i(\epsilon) = T_\epsilon\esp J_1(\epsilon)$.

The property~\eqref{eq:phimixing} is known as $\phi$-mixing if $\limm\phi(m) = 0$. Clearly, it is  weaker than the $\psi$-mixing property~\eqref{eq:psimixing} established in Lemma~\ref{lem:mixing} (see~\citep{bradley05basic} for more background on mixing conditions). As a consequence,~\eqref{eq:phimixing} now holds with 
\[
\phi(m) = C_qe^{-m}, m\in\N. 
\]
With this choice of $\phi$, we can choose $m = m_\epsilon$ such that the right-hand side of~\eqref{eq:chen} vanishes as $\epsilon\downarrow 0$.
Indeed, since we have shown that 
\equh\label{eq:1}
\lim_{\epsilon\downarrow0} \lambda_\epsilon =  \lim_{\epsilon\downarrow0}T_\epsilon\proba(A(\epsilon)) = \alpha_q,
\eque 
it suffices to take $m_\epsilon = \ceil{\epsilon^{-3+\delta}}$ for any $\delta>0$ to make the second and last terms in the right-hand side of~\eqref{eq:chen} vanish. Therefore, the desired convergence Poisson limit theorem will follow from
\equh\label{eq:var}
\lim_{\epsilon\downarrow0}\var(W_\epsilon) = \alpha_q.
\eque
To show this, write
\equh\label{eq:var1}
\var(W_\epsilon) = \summ i1{T_\epsilon}\var\pp{J_i(\epsilon)} + 2\frac1{T_\epsilon}\summ \ell2{T_\epsilon}\pp{1-\frac{\ell-1}{T_\epsilon}}T_\epsilon^2\cov\pp{J_1(\epsilon),J_\ell(\epsilon)}.
\eque
It follows from~\eqref{eq:1} that  the first summand converges to $\alpha_q$.
By~\eqref{eq:psimixing},
\begin{multline*}
\abs{\cov\pp{J_1(\epsilon),J_\ell(\epsilon)}}  = \abs{\proba(J_1(\epsilon) = 1, J_\ell(\epsilon) = 1) - \proba(J_1(\epsilon) = 1)\proba(J_\ell(\epsilon) = 1)} \\
\leq\psi(\ell-1)\proba(J_1(\epsilon) = 1)\proba(J_\ell(\epsilon) = 1) = C_qe^{-(\ell-1)}(\esp J_1(\epsilon))^2 \mfa\ell\ge 2.
\end{multline*}
Combining this and~\eqref{eq:1}, the second term in~\eqref{eq:var1} converges to zero as $\epsilon\downarrow0$. We have established~\eqref{eq:var} and the desired result.
\end{proof}

\begin{proof}[Proof of Theorem~\ref{thm:1}]
We omit the sup-script in $N\topp q$.
First, writing
$N((0,T_\epsilon],\epsilon) = \summ i1{T_\epsilon}N((i-1,i],\epsilon)$, 
we have
\[
\proba(N((0,T_\epsilon]),\epsilon)\neq W_\epsilon) \leq T_\epsilon \proba(N((0,1],\epsilon)\ge 2).
\]
As in the proof of Proposition~\ref{prop:bigjumps}, using the same notation of $B_{i,n}$,
\[
\lim_{\epsilon\downarrow0}T_\epsilon\proba(N((0,1],\epsilon)\ge 2) \leq \lim_{\epsilon\downarrow0} T_\epsilon\limsupn \sum_{i\neq j}\proba(B_{i,n}(\epsilon)\cap B_{j,n}(\epsilon)) = 0
\]
by~\eqref{eq:Bijn}. Finally, $\proba(N(0,T_\epsilon],\epsilon)\neq N((0,\epsilon^{-3}],\epsilon)) \leq \proba(N((0,1],\epsilon)>0)\to 0$ as $\epsilon\downarrow 0$. The desired result thus follows.
\end{proof}
\subsection*{Acknowledgements} The author would like to thank Wlodek Bryc for many inspiring and helpful discussions, and for his careful reading of the paper. The author's research was partially supported by NSA grant  H98230-14-1-0318.

\bibliographystyle{apalike}
%\bibliography{../include/references}
\bibliography{references}%,mini

\end{document}